\DeclareMathOperator{\Log}{Log}
\newcommand{\firstpagefoot}[1]{%
  \begingroup
  \renewcommand\thefootnote{}\footnote{#1}%
  \addtocounter{footnote}{-1}%
  \endgroup
}
\newtheorem{theorem}{Theorem}[section]
\theoremstyle{definition}
\theoremstyle{remark}
\newtheorem{remark}[theorem]{Remark}
\title{Constancy of an Infinite Cyclotomic Product via Ramanujan Sums}
\author{Hartosh Singh Bal}
\date{}
\begin{document}
\maketitle

\begin{abstract}
\noindent
We show that the infinite product defined by 
\[
P(z) = -\prod_{n=1}^{\infty} (\Phi_n(z))^{-1/n},
\]
where \( \Phi_n(z) \) is the \( n \)-th cyclotomic polynomial, is constant inside the unit disk. The proof translates a result of Ramanujan on Ramanujan sums, equivalent to the prime number theorem, to the setting of infinite products. We also show that similar identities proved by Ramanujan lead to additional results on infinite cyclotomic products.
\end{abstract}

\firstpagefoot{
\textbf{Author address:} The Caravan, Jhandewalan Extn., New Delhi 110001, India\\
\texttt{hartoshbal@gmail.com}\\[0.25em]
\textbf{Keywords:} Cyclotomic polynomials; Ramanujan sums; Möbius inversion; Euler products; Arithmetic functions.

\textbf{2020 Mathematics Subject Classification:}
Primary 11A25; Secondary 11R18, 30B10, 11C08.
}

\section{Introduction}

The study of cyclotomic integers and their associated products plays an 
important role in number theory, with connections to modular forms, 
transcendence questions, and classical arithmetic functions. In this paper 
we revisit one such product, showing that its logarithmic derivative has 
a surprisingly simple description in terms of Ramanujan sums. Our main 
result is stated in Theorem~\ref{thm:main} below, which provides the 
starting point for the rest of the paper.

\begin{theorem}\label{thm:main}
The function
\[
P(z) = - \prod_{n=1}^{\infty} (\Phi_n(z))^{-1/n},
\]
where \( \Phi_n(z) \) denotes the \( n \)-th cyclotomic polynomial, is identically constant, \( P(z) \equiv 1 \), inside the unit disk \( |z| < 1 \).
\end{theorem}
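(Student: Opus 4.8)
The plan is to work with the logarithmic derivative, which avoids every branch-of-logarithm issue, and then to pin down the value of the limiting product at $z=0$. Set $Q_M(z)=\prod_{n=1}^{M}\Phi_n(z)^{-1/n}$, where for $n\ge 2$ we take the branch of $\Phi_n(z)^{-1/n}$ that is holomorphic on the simply connected disk $|z|<1$ and normalized by $\Phi_n(0)^{-1/n}=1$; this is legitimate because $\Phi_n$ has all its zeros on the unit circle and $\Phi_n(0)=1$ for $n\ge 2$, while the $n=1$ factor is simply $(z-1)^{-1}$. Each $Q_M$ is holomorphic and zero-free on $|z|<1$, and $Q_M(0)=\Phi_1(0)^{-1}=-1$ for every $M$, so that $P_M(z):=-Q_M(z)$ satisfies $P_M(0)=1$. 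It therefore suffices to prove that $zQ_M'(z)/Q_M(z)\to 0$ locally uniformly on $|z|<1$: integrating this from the origin forces $Q_M(z)\to -1$, hence $P(z)=\lim_{M\to\infty}P_M(z)\equiv 1$.

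To compute the logarithmic derivative I would start from the Möbius-inverted form $\Phi_n(z)=\prod_{d\mid n}(z^d-1)^{\mu(n/d)}$, which gives $z\Phi_n'(z)/\Phi_n(z)=\sum_{d\mid n}\mu(n/d)\,\frac{dz^d}{z^d-1}$. Expanding $\frac{z^d}{z^d-1}=-\sum_{k\ge 1}z^{dk}$ for $|z|<1$ and collecting the coefficient of $z^N$ produces $-\sum_{d\mid\gcd(n,N)}d\,\mu(n/d)$, which by the classical divisor-sum identity equals $-c_n(N)$, the Ramanujan sum. Thus $z\Phi_n'(z)/\Phi_n(z)=-\sum_{N\ge 1}c_n(N)z^N$, and
\[
z\frac{Q_M'(z)}{Q_M(z)}=-\sum_{n=1}^{M}\frac1n\cdot\frac{z\Phi_n'(z)}{\Phi_n(z)}=\sum_{N=1}^{\infty}S_M(N)\,z^N,\qquad S_M(N):=\sum_{n=1}^{M}\frac{c_n(N)}{n}.
\]

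The arithmetic heart is then an elementary rearrangement: writing $n=dm$ with $d\mid N$ gives $S_M(N)=\sum_{d\mid N}\sum_{m\le M/d}\mu(m)/m=\sum_{d\mid N}\,m(M/d)$, where $m(x):=\sum_{n\le x}\mu(n)/n$. Two facts about $m(x)$ do all the work. First, $m(x)\to 0$ as $x\to\infty$ — this is precisely Ramanujan's assertion that $\sum_{n\ge 1}c_n(N)/n=0$, a statement equivalent to the prime number theorem — so $S_M(N)\to 0$ for each fixed $N$. Second, the classical elementary bound $|m(x)|\le 1$ yields $|S_M(N)|\le\tau(N)$ uniformly in $M$, where $\tau(N)$ is the number of divisors of $N$. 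Since $\sum_{N\ge 1}\tau(N)r^N<\infty$ for every $r<1$, the termwise limit $S_M(N)\to 0$ together with the summable domination $|S_M(N)|\le\tau(N)$ lets us pass to the limit inside the power series: $zQ_M'(z)/Q_M(z)\to 0$ uniformly on each disk $|z|\le r<1$, which is exactly the input the first paragraph requires.

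The main obstacle is conceptual rather than computational. The product is not absolutely convergent — the $n$-th factor is $\Phi_n(z)^{-1/n}=1+\tfrac{\mu(n)}{n}z+\cdots$ and $\sum_n|\mu(n)|/n$ diverges — so the constancy genuinely rests on the prime-number-theorem-level cancellation in $\sum\mu(n)/n$, and no manipulation insensitive to the order of summation can succeed. Concretely, the delicate point is the interchange of $\lim_{M\to\infty}$ with the sum over $N$ in the display above, which is exactly what the uniform bound $|S_M(N)|\le\tau(N)$ (equivalently, the boundedness of the partial sums of $\sum\mu(n)/n$) is there to license. Everything else — the branch ambiguity from the lone factor $(z-1)^{-1}$, and the sign in front of the product — is dealt with once and for all by tracking $Q_M(0)=-1$ directly instead of taking a logarithm of the whole infinite product.
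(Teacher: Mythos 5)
Your proof is correct, and it takes a genuinely different --- and in one important respect more careful --- route than the paper. The paper expands $\log\hat\Phi_n(z)=\sum_{(k,n)=1}\log(1-z\zeta_n^k)$ directly, swaps the sums over $n$ and $m$ to get $\log P(z)=\sum_m \frac{z^m}{m}\sum_n \frac{c_n(m)}{n}$, and then invokes Ramanujan's identity $\sum_n c_n(m)/n=0$; to justify the interchange it asserts that the double series converges absolutely, which is in fact false (for $m=1$ the inner sum is $\sum_n|\mu(n)|/n=\infty$), so the interchange is exactly the delicate point you identify. Your argument supplies the missing justification: by working with the partial products $Q_M$ and their logarithmic derivatives, deriving $z\Phi_n'(z)/\Phi_n(z)=-\sum_N c_n(N)z^N$ from the M\"obius factorization $\Phi_n(z)=\prod_{d\mid n}(z^d-1)^{\mu(n/d)}$ together with $c_n(N)=\sum_{d\mid\gcd(n,N)}d\,\mu(n/d)$, and then rewriting $S_M(N)=\sum_{d\mid N}m(M/d)$, you reduce everything to the two facts $m(x)\to 0$ (the PNT-equivalent input, the same one the paper uses in the guise of $\sum_n c_n(m)/n=0$) and the elementary bound $|m(x)|\le 1$, whose consequence $|S_M(N)|\le\tau(N)$ gives the summable domination needed to pass to the limit term by term. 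You also dispose of the branch and sign issues cleanly by tracking $Q_M(0)=-1$ rather than taking a global logarithm. What the paper's route buys is brevity and a transparent link to Ramanujan's trigonometric sums; what yours buys is a fully rigorous treatment of the conditional convergence on which the whole theorem rests, plus a self-contained reduction of Ramanujan's identity to the single statement $\sum_n\mu(n)/n=0$.
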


Our approach uses a classical result by Ramanujan on Ramanujan sums \cite{4}, which is equivalent to the prime number theorem. It carries forward the idea of using the logarithmic derivative to derive information on an associated infinite product, as in \cite{1}. An earlier version of this work used a formal interchange of infinite sums. The present version replaces this step with a truncation-and-limit argument, eliminating any convergence ambiguity. The results are unchanged.

In subsequent sections, we construct new infinite product identities involving cyclotomic polynomials by invoking other results from Ramanujan's paper. To our knowledge, results of this type for infinite cyclotomic products have not appeared previously. A related example is \cite{2}, where the products are of the form $\prod_{k \ge 0} \Phi_\ell(z^{p^k})$ involving a single cyclotomic factor.

\section{Proof of Theorem 1.1}

Before we begin the proof, we recall that the \( n \)-th cyclotomic polynomial is defined as 
\[
\Phi_n(z)= \prod_{\substack{1 \leq k \leq n \\ \gcd(k,n)=1}} (z - e^{2\pi i k/n}),
\]
and Ramanujan sums, denoted by \( c_n(m) \), are given by
\[
c_n(m) = \sum_{\substack{1 \leq k \leq n \\ \gcd(k,n)=1}} e^{2\pi i k m/n}.
\]
These sums were first studied in detail by Ramanujan \cite{4}, who used them to express arithmetic functions in terms of trigonometric sums.

For convenience, define
\[
\hat{\Phi}_n(z) \;:=\; \prod_{\substack{1\le k\le n\\ (k,n)=1}} \bigl(1 - z\,\zeta_n^{\,k}\bigr),
\]
so that $\hat{\Phi}_1(z)=1-z$ and $\hat{\Phi}_n(z)=\Phi_n(z)$ for all $n\ge 2$.

\begin{proof}[Proof of Theorem~\ref{thm:main}]
We write the primitive \( n \)-th roots of unity \( e^{2\pi i k/n} \) as \( \zeta_{n_k} \); hence
\[
c_n(m) = \sum_{\substack{1 \leq k \leq n \\ \gcd(k,n)=1}} \zeta_{n_k}^m.
\]

We begin from the definition
\[
\Phi_n(z) \;=\; \prod_{\substack{1 \leq k \leq n \\ \gcd(k,n)=1}} \bigl(z - \zeta_n^{\,k}\bigr),
\]
where $\zeta_n = e^{2\pi i/n}$.  For each factor we write
\[
z - \zeta_n^{\,k} \;=\; \zeta_n^{\,k} \Bigl( \tfrac{z}{\zeta_n^{\,k}} - 1 \Bigr),
\]
so that
\[
\Phi_n(z) \;=\;
\Biggl(\prod_{\substack{(k,n)=1}} \zeta_n^{\,k}\Biggr)
\prod_{\substack{(k,n)=1}} \Bigl(\tfrac{z}{\zeta_n^{\,k}} - 1\Bigr).
\]

For $n>2$, the number of primitive residues $\varphi(n)$ is even and the primitive roots occur in inverse pairs $\xi,\xi^{-1}$, so their product is $1$.  For $n=2$, the only primitive root is $-1$, giving a product of $-1$, and for $n=1$ the product is $1$.  Pulling out the minus signs from each term $\tfrac{z}{\zeta_n^{\,k}} - 1 = -(1 - z \zeta_n^{-k})$ contributes a factor $(-1)^{\varphi(n)}$, which is $+1$ for $n>2$ and cancels the prefactor for $n=2$.  Thus, for all $n \geq 2$ we obtain
\[
\Phi_n(z) \;=\; \prod_{\substack{(k,n)=1}} \bigl(1 - z \zeta_n^{-k}\bigr).
\] Since the map $k \mapsto k^{-1}$ permutes the primitive residue classes, the set
$\{\zeta_n^{-k}\}$ is the same as $\{\zeta_n^{\,k}\}$, and therefore
\[
\Phi_n(z) \;=\; \prod_{\substack{(k,n)=1}} \bigl(1 - z \zeta_n^{\,k}\bigr),
\qquad n \geq 2.
\]

Finally, for the two base cases we record explicitly:
\[
\Phi_2(z) = z+1 = 1 - (-1)z, \qquad 
\Phi_1(z) = z-1 = -(1-z).
\]
In the case $n=1$ the product form gives $1-z$, which differs from $\Phi_1(z)$ by an overall sign.

Consider the truncated logarithms
\[
\Log P_N(z)\;:=\;-\sum_{n\le N}\frac{1}{n}\,\Log\hat\Phi_n(z),
\qquad (N\ge1),
\]
where $\Log\hat\Phi_n$ denotes the holomorphic branch on $|z|<1$ determined by
$\Log\hat\Phi_n(0)=0$ (so $\hat\Phi_n(0)=1$).  Thus $P_N(z):=\exp(\Log P_N(z))$
is well-defined and analytic on the open unit disk.

For $|z|<1$ we expand, for each fixed $n$,
\[
\Log\hat\Phi_n(z)=\sum_{\substack{1\le k\le n\\(k,n)=1}}\Log(1-z\zeta_{n_k})
=-\sum_{m\ge1}\frac{z^m}{m}\sum_{\substack{1\le k\le n\\(k,n)=1}}\zeta_{n_k}^{\,m}
=-\sum_{m\ge1}\frac{c_n(m)}{m}\,z^m,
\]
using $\Log(1-w)=-\sum_{m\ge1}w^m/m$ for $|w|<1$ and the definition of $c_n(m)$.
Substituting into the finite sum defining $\Log P_N$ and interchanging the
finite $n$--sum with the absolutely convergent $m$--series yields
\[
\Log P_N(z)
=\sum_{m\ge1}\frac{z^m}{m}\,S_N(m),
\qquad
S_N(m):=\sum_{n\le N}\frac{c_n(m)}{n}.
\]

Ramanujan's identity is to be read as the vanishing of these partial sums:
\begin{equation}\label{eq:Ram-vanish}
\lim_{N\to\infty} S_N(m)=\lim_{N\to\infty}\sum_{n\le N}\frac{c_n(m)}{n}=0
\qquad (m\ge1),
\end{equation}
a statement equivalent to the prime number theorem \cite{3,4}.
Moreover, writing $c_n(m)=\sum_{d\mid(n,m)} d\,\mu(n/d)$ gives
\[
S_N(m)=\sum_{d\mid m}\;\sum_{r\le N/d}\frac{\mu(r)}{r},
\]
and under \eqref{eq:Ram-vanish} the inner sums are uniformly bounded; hence
$|S_N(m)|\ll\tau(m)$ independently of $N$.  Since
$\sum_{m\ge1}\tau(m)|z|^m/m<\infty$ for $|z|<1$, dominated convergence in $m$
implies $\Log P_N(z)\to 0$ for every $|z|<1$. Therefore $P_N(z)\to 1$ on the
unit disk, and defining $P(z):=\lim_{N\to\infty}P_N(z)$ we obtain $P(z)\equiv 1$.

Conversely, assume that $P(z)\equiv 1$ on $|z|<1$, where $P$ is defined as the
limit of the truncations $P_N$ from above. For each $N$ we have
\[
\Log P_N(z)=\sum_{m\ge1}\frac{z^m}{m}\,S_N(m),
\qquad
S_N(m):=\sum_{n\le N}\frac{c_n(m)}{n},
\]
and the series converges absolutely on $|z|<1$. Differentiating term-by-term gives
\[
z\frac{P_N'(z)}{P_N(z)}=z\frac{d}{dz}\Log P_N(z)=\sum_{m\ge1} S_N(m)\,z^m.
\]
Since $P_N\to P\equiv 1$ uniformly on compact subsets of $|z|<1$, we have
$zP_N'(z)/P_N(z)\to 0$ uniformly on compact subsets as well, hence for each fixed $m$
the coefficients satisfy $S_N(m)\to 0$ as $N\to\infty$. Equivalently,
\[
\lim_{N\to\infty}\sum_{n\le N}\frac{c_n(m)}{n}=0\qquad(m\ge1),
\]
which is Ramanujan's identity in its correct (partial-sum/Abel) sense.

\end{proof}

\begin{remark}[Partial products and boundary behavior]
Although the analytic function $P(z)$ collapses to the constant~$1$ on $|z|<1$,
the truncations
\[
P_N(z):=\exp\!\Bigl(-\sum_{n\le N}\frac{1}{n}\Log\hat\Phi_n(z)\Bigr)
\qquad (|z|<1)
\]
form a natural approximation scheme and converge uniformly to~$1$ on compacta.
On the unit circle, the individual factors $\hat\Phi_n$ vanish at roots of unity
of order $n$, so the boundary profiles of $P_N$ develop increasingly dense
``spikes'' at roots of unity of order at most $N$. In this sense $\{P_N\}$ encodes
a boundary concentration phenomenon concentrated on rational angles, even though
the interior limit is analytically trivial.
\end{remark}

\begin{theorem}
For any real parameter $s>1$,
\[
\prod_{i \ge 1} \Bigl(1 - z^i\Bigr)^{-1/i^{s}} 
= \prod_{i \ge 1} \hat{\Phi}_i(z)^{-\zeta(s)/i^{s}}.
\]
\end{theorem}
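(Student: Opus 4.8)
The plan is to take logarithms of both sides and reduce the claimed identity to an elementary resummation of Dirichlet-type series; the factor $\zeta(s)$ will emerge from $\sum_{n\ge1}\mu(n)n^{-s}=\zeta(s)^{-1}$, valid for $s>1$. The only arithmetic input is the classical factorization of $1-z^{n}$ into cyclotomic pieces together with Möbius inversion — no analytic number theory enters, in contrast with the proof of Theorem~\ref{thm:main}.

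\textbf{Step 1 (cyclotomic factorization and its inverse).} First I would record that, with the normalization $\hat{\Phi}$ introduced above,
\[
1-z^{n}\;=\;\prod_{d\mid n}\hat{\Phi}_d(z),\qquad n\ge 1 ,
\]
which follows from $z^{n}-1=\prod_{d\mid n}\Phi_d(z)$ by exactly the sign bookkeeping already carried out in the proof of Theorem~\ref{thm:main}: replacing $\Phi_1(z)=z-1$ by $\hat{\Phi}_1(z)=1-z$ absorbs the overall sign, while $\hat{\Phi}_d=\Phi_d$ for $d\ge 2$. I would then set $\log\hat{\Phi}_n(z):=\sum_{(k,n)=1}\log\bigl(1-z\zeta_n^{\,k}\bigr)$ using the principal branch on each factor (legitimate since $|z\zeta_n^{\,k}|=|z|<1$). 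Since $\sum_{d\mid n}\log\hat{\Phi}_d(z)=\sum_{\omega^{n}=1}\log(1-z\omega)$ and $\log(1-z^{n})$ are both continuous branches of the logarithm of $1-z^{n}=\prod_{\omega^{n}=1}(1-z\omega)$ on the simply connected disk $|z|<1$, and both vanish at $z=0$, they coincide there; Möbius inversion then gives
\[
\log\hat{\Phi}_n(z)\;=\;\sum_{d\mid n}\mu(n/d)\,\log(1-z^{d}),\qquad |z|<1 .
\]

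\textbf{Step 2 (the computation).} Taking the principal logarithm of the right-hand product and inserting the inversion formula,
\[
-\,\zeta(s)\sum_{n\ge1}\frac{\log\hat{\Phi}_n(z)}{n^{s}}
\;=\;-\,\zeta(s)\sum_{n\ge1}\frac1{n^{s}}\sum_{d\mid n}\mu(n/d)\,\log(1-z^{d}).
\]
Writing $n=de$ and interchanging the sums (justified below) factors the right-hand side as
\[
-\,\zeta(s)\Bigl(\sum_{e\ge1}\frac{\mu(e)}{e^{s}}\Bigr)\Bigl(\sum_{d\ge1}\frac{\log(1-z^{d})}{d^{s}}\Bigr)
\;=\;-\sum_{d\ge1}\frac{\log(1-z^{d})}{d^{s}},
\]
which is precisely the principal logarithm of the left-hand product $\prod_{i\ge1}(1-z^{i})^{-1/i^{s}}$. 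Both infinite products converge uniformly on compact subsets of $|z|<1$ and take the value $1$ at $z=0$ — for the right-hand product the convergence of $\sum_{n}n^{-s}\log\hat{\Phi}_n(z)$ is part of the display above, since that series equals $\zeta(s)^{-1}\sum_{d}d^{-s}\log(1-z^{d})$. Exponentiating the equality of logarithms yields the asserted identity on the unit disk.

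\textbf{Expected main obstacle.} The delicate point — and the reason for routing the argument through the Möbius-inverted form of $\log\hat{\Phi}_n$ rather than through its power-series expansion $\log\hat{\Phi}_n(z)=-\sum_{m\ge1}c_n(m)z^{m}/m$ — is to secure \emph{absolute} convergence of the double series for the entire range $s>1$. With the inverted form this is routine: for $|z|\le r<1$ one has $|\log(1-z^{d})|\le-\log(1-r^{d})\le r^{d}/(1-r)$, so
\[
\sum_{d,e\ge1}\frac{|\mu(e)|}{(de)^{s}}\,\bigl|\log(1-z^{d})\bigr|
\;\le\;\frac{\zeta(s)}{1-r}\sum_{d\ge1}\frac{r^{d}}{d^{s}}\;<\;\infty ,
\]
so Fubini applies for every $s>1$, whereas a direct power-series expansion would force the bound $|c_n(m)|\le\varphi(n)$ and the sum $\sum_{n}\varphi(n)n^{-s}=\zeta(s-1)/\zeta(s)$, finite only for $s>2$. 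A minor companion issue is the branch-consistency check for $\log\hat{\Phi}_n$ underlying the Möbius inversion, sketched in Step~1. Finally, $s>1$ is genuinely necessary: at $s=1$ the right-hand side is undefined because $\zeta(1)=\infty$, while a one-line resummation ($n=dm\mapsto N$) shows that the logarithm of the left-hand product equals $\sum_{N\ge1}\sigma_{1-s}(N)\,z^{N}/N$ with $\sigma_{a}(N):=\sum_{d\mid N}d^{a}$, so the theorem may equivalently be read as an Euler-type factorization, valid for $s>1$, of this Lambert series — a conclusion one could alternatively reach from Ramanujan's identity $\sum_{n\ge1}c_n(m)n^{-s}=\sigma_{1-s}(m)/\zeta(s)$, in keeping with the paper's theme.
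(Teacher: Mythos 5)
Your proof is correct, but it takes a genuinely different route from the paper. The paper derives the identity as a direct transcription of Ramanujan's formula $\sigma(n)/n^{s}=\zeta(s+1)\sum_{q\ge1}c_q(n)/q^{s+1}$: multiplying by $z^{n}/n$, summing over $n$, and recognizing each side as the logarithm of the corresponding product, then shifting $s\mapsto s-1$. You instead bypass Ramanujan sums entirely, using only the factorization $1-z^{n}=\prod_{d\mid n}\hat\Phi_d(z)$, Möbius inversion of the logarithms, and the Dirichlet series $\sum_{e}\mu(e)e^{-s}=\zeta(s)^{-1}$, so the $\zeta(s)$ in the exponent appears as the reciprocal of the Möbius series rather than from Ramanujan's identity. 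What each buys: the paper's route stays within its stated theme (infinite cyclotomic products as avatars of Ramanujan-sum identities) and leads naturally to the rearranged expression for $\zeta(s+1)$ in the subsequent remark; your route is elementary and self-contained, and — as you correctly flag — it settles the analytic side more firmly, since your Fubini bound $|\log(1-z^{d})|\le r^{d}/(1-r)$ justifies the interchange for every $s>1$, whereas the paper's interchange of $\sum_{n}\sum_{i}c_i(n)i^{-(s+1)}z^{n}/n$ is only manifestly absolutely convergent under the crude bound $|c_i(n)|\le\varphi(i)$ when the theorem's parameter exceeds $2$, leaving the range $1<s\le 2$ in need of an extra summability argument that the paper does not supply. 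Your closing observation that the left-hand logarithm is $\sum_{N\ge1}\sigma_{1-s}(N)z^{N}/N$, so the theorem is an Euler-type factorization of that Lambert series recoverable from $\sum_{n\ge1}c_n(m)n^{-s}=\sigma_{1-s}(m)/\zeta(s)$, is exactly the bridge back to the paper's method.
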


\begin{proof}
Let $s>1$ and $|z|<1$. We interpret $\hat\Phi_i(z)^a$ as
$\exp(a\,\Log\hat\Phi_i(z))$, where $\Log\hat\Phi_i$ is the holomorphic branch on
$|z|<1$ with $\Log\hat\Phi_i(0)=0$.

A classical Ramanujan expansion asserts that for $s>1$,
\[
\sigma_{1-s}(n)=\zeta(s)\sum_{i\ge1}\frac{c_i(n)}{i^{s}},
\]
where $\sigma_\alpha(n):=\sum_{d\mid n} d^\alpha$.
Multiplying by $z^n/n$ and summing over $n\ge1$ gives
\[
\sum_{n\ge1}\frac{\sigma_{1-s}(n)}{n}z^n
=\zeta(s)\sum_{n\ge1}\sum_{i\ge1}\frac{c_i(n)}{i^{s}}\frac{z^n}{n}.
\]
All series are absolutely convergent for $|z|<1$ and $s>1$, so we may interchange sums.

On the left,
\[
\sum_{n\ge1}\frac{\sigma_{1-s}(n)}{n}z^n
= -\sum_{i\ge1}\frac{1}{i^{s}}\log(1-z^i)
=\log\!\Bigl(\prod_{i\ge1}(1-z^i)^{-1/i^{s}}\Bigr).
\]
On the right, using $\Log\hat\Phi_i(z)=-\sum_{n\ge1}c_i(n)z^n/n$ for $|z|<1$,
\[
\zeta(s)\sum_{i\ge1}\frac{1}{i^{s}}\sum_{n\ge1}\frac{c_i(n)}{n}z^n
= -\sum_{i\ge1}\frac{\zeta(s)}{i^{s}}\Log\hat\Phi_i(z)
=\log\!\Bigl(\prod_{i\ge1}\hat\Phi_i(z)^{-\zeta(s)/i^{s}}\Bigr).
\]
Exponentiating yields the claimed identity.
\end{proof}

\begin{remark}[Branch convention]
Throughout, for each $i\ge1$ we work on the unit disk $|z|<1$, where $\hat\Phi_i(z)\neq0$
and $\hat\Phi_i(0)=1$. We therefore fix the unique holomorphic branch $\Log\hat\Phi_i$
on $|z|<1$ normalized by $\Log\hat\Phi_i(0)=0$, and we interpret complex powers via
\[
\hat\Phi_i(z)^{a}:=\exp\!\bigl(a\,\Log\hat\Phi_i(z)\bigr),\qquad a\in\mathbb C.
\]
Similarly, $\Log(1-z^i)$ denotes the principal holomorphic branch on $|z|<1$ with
$\Log(1)=0$.
With this convention, all logarithmic identities below hold as identities of holomorphic
functions on $|z|<1$, and exponentiation yields the corresponding product identities.
In particular, the displayed ratio formula below is to be understood using these same holomorphic
branches of $\Log(1-z^i)$ and $\Log\hat\Phi_i(z)$ on $|z|<1$.
\end{remark}

\begin{remark}
Rearranging the argument above also yields, for $|z|<1$,
\[
\zeta(s+1)
= \frac{\sum_{i \ge 1}\frac{1}{i^{s+1}}\ln(1-z^i)}
       {\sum_{i \ge 2}\frac{1}{i^{s+1}}\ln(\Phi_i(z))+\ln(1-z)},
\]
an alternative expression that makes explicit the link between $\zeta(s+1)$
and cyclotomic factors.
\end{remark}

\section{Infinite Cyclotomic Products and the Theta Function}

We conclude with a classical identity of Ramanujan concerning the number
of representations of a positive integer $n$ as a sum of two squares.
Let $r_2(n)$ denote this number of representations. Ramanujan proved that
\[
\pi \sum_{i = 0}^{\infty} (-1)^i \frac{c_{2i+1}(n)}{2i+1} \; = \; r_2(n),
\qquad (n\ge1).
\]
A classical divisor-class formula gives
\[
r_2(n) \;=\; 4\bigl(d_1(n)-d_3(n)\bigr),
\]
where $d_1(n)$ and $d_3(n)$ count the divisors of $n$ congruent to $1$ and
$3$ modulo $4$, respectively.

\medskip
\noindent
\textbf{Branch convention.}
For $|z|<1$ we use the holomorphic branches of $\Log(1-z^m)$ and
$\Log\hat\Phi_n(z)$ normalized by $\Log(1)=0$ and $\Log\hat\Phi_n(0)=0$.
All complex powers are interpreted by
\[
A(z)^{\alpha}:=\exp\!\bigl(\alpha\,\Log A(z)\bigr),\qquad \alpha\in\mathbb{C}.
\]
(Here the hat only matters for the $n=1$ factor, since $\hat\Phi_n=\Phi_n$ for $n\ge2$.)

Applying the cyclotomic product method developed in the previous section,
we arrive at the infinite product identity
\[
\prod_{i = 0}^{\infty} \frac{(1 - z^{4i+3})^{4/(4i+3)}}
                           {(1 - z^{4i+1})^{4/(4i+1)}}
\; = \; \prod_{i = 0}^{\infty}
        \frac{\Phi_{4i+3}(z)^{\pi/(4i+3)}}
             {\hat\Phi_{4i+1}(z)^{\pi/(4i+1)}},
\qquad |z|<1.
\]

Taking the logarithmic derivative of this identity yields the power-series relation
\[
\pi\sum_{n=1}^\infty z^n
      \sum_{i=0}^\infty (-1)^i\frac{c_{2i+1}(n)}{2i+1}
\;=\; \sum_{n=1}^\infty r_2(n)z^n,
\qquad |z|<1,
\] in which the Ramanujan identity supplies the coefficient equality
\[
r_2(n)=\pi\sum_{i=0}^{\infty}(-1)^i\,\frac{c_{2i+1}(n)}{2i+1},
\qquad (n\ge1).
\]
.

The right-hand side is the $z$-expansion of $\theta_3(z)^2-1$, where
\[
\theta_3(z):=\sum_{m\in\mathbb{Z}} z^{m^2}.
\]
Equivalently, upon setting $z=q=e^{2\pi i\tau}$, the function $\theta_3(\tau)^2$
is a holomorphic modular form of weight~$1$ on~$\Gamma_0(4)$ \cite{5}. Hence
\[
\begin{aligned}
q\frac{d}{dq}\log(\cdots)
&= \sum_{n\ge1} r_2(n)\,q^n \\
&= \theta_3(\tau)^2-1,
\end{aligned}
\]
so the logarithmic derivative is (up to its constant term) a weight-$1$ modular form on
$\Gamma_0(4)$.

Taking logarithms of the infinite product identity gives
\begin{multline*}
\sum_{i \ge 0} \Biggl[\frac{4}{4i+3}\Log(1-z^{4i+3})
 - \frac{4}{4i+1}\Log(1-z^{4i+1})\Biggr] \\
= \sum_{i \ge 0} \Biggl[\frac{\pi}{4i+3}\Log\Phi_{4i+3}(z)
 - \frac{\pi}{4i+1}\Log\hat\Phi_{4i+1}(z)\Biggr],
\qquad |z|<1.
\end{multline*}

Finally, rearranging terms leads to the striking logarithmic identity
\[
\frac{\pi}{4} \;=\;
\frac{\displaystyle\sum_{i \ge 0} (-1)^{i+1}\,\frac{\Log(1-z^{2i+1})}{2i+1}}
{\displaystyle\sum_{i \ge 0} (-1)^{i+1}\,\frac{\Log\hat\Phi_{2i+1}(z)}{2i+1}},
\qquad |z|<1,
\]
where all logarithms are taken with the holomorphic branches fixed above.

\end{document}